\newtheorem{theorem}{Theorem}
\theoremstyle{plain}
\newtheorem{definition}{Definition}
\newtheorem{example}{Example}
\newtheorem{lemma}{Lemma}
\newtheorem{remark}{Remark}
\numberwithin{equation}{section}
\begin{document}
\title[]{ON THE $F$-CONTRACTION PROPERTIES OF MULTIVALUED INTEGRAL TYPE
TRANSFORMATIONS}
\author{Derya Sekman}
\address{Department of Mathematics, Faculty of Arts and Sciences, Ahi Evran
University, 40100 K\i r\c{s}ehir, Turkey}
\email{derya.sekman@ahievran.edu.tr}
\urladdr{}
\thanks{}
\author{Vatan Karakaya}
\address{Department of Mathematical Engineering, Faculty of
Chemistry-Metallurgical, Y\i ld\i z Technical University, 34210 Istanbul,
Turkey}
\email{vkkaya@yildiz.edu.tr}
\urladdr{}
\subjclass{47H09, 47H10, 47G10}
\keywords{F-contraction, multivalued mapping, Hausdorff metric}
\dedicatory{}
\thanks{}

\begin{abstract}
The main purpose of this work is to extend the properties of multivalued
transformations to the integral type transformations and to obtain the
existence of fixed points under $F$-contraction. In addition, the results of
this study were evaluated with some interesting example.
\end{abstract}

\maketitle

\section{\protect\bigskip Introduction and Preliminaries}

In the development process of fixed point theory, every new problem has
contributed to the expansion of this theory and it has been possible to
incorporate new fields and new concepts into it. In this sense, the
existence of fixed points of multivalued transformations is an important
research area. These studies, which started with Nadler \cite{nad}, leaded
many new ideas. In this way, many authors including Gordji \cite{gord},
Berinde \cite{ber}, Sekman et al. \cite{sekman} the others studied and
generalized set-valued mappings. The fact that the transformation classes
have the contraction condition, first given by Banach \cite{ban}, has an
important role in the existence and uniqueness of the fixed point. Quiet
recently, Wardowski \cite{ward} introduced the concept of $F$-contraction
and examined the properties of this transformation for single valued
transformations. Later, many authors have done a lot of work using this
idea. One can find in list \cite{acar,ekp,minak,sgroi}. Afterward, by
combining the concepts of $F$-contraction and multivalued mapping, Altun et
al. \cite{altun} defined the multivalued $F$-contraction. Our goal in this
work is to determine the behavior of multivalued integral mappings under $F$%
-contraction and to investigate their some properties, which is a open
problem in the current literature. It also supports the results of the study
with some interesting examples.

Let $(X,d)$ be a metric space. Denote by $P(X)$ the family of all nonempty
subsets of $X$, $CB(X)$ the family of all nonempty, closed and bounded
subsets of $X$ and $K(X)$ the family of all nonempty compact subsets of $X$.

\begin{definition}
Let $X$ be a nonempty set. $T$ is said to be a multivalued mapping if $T$ is
a mapping from $X$ to the power set of $X.$ We denote a multivalued map by $%
T:X\rightarrow P(X).$
\end{definition}

\begin{definition}[see;\protect\cite{kelley}]
Let $(X,d)$ be a complete metric space. We define the Hausdorff metric on $%
CB(X)$ by%
\begin{equation*}
{\small H(A,B):=}\max \left\{ \underset{x\in A}{\sup }{\small D(x,B),}%
\underset{y\in B}{\sup }{\small D(y,A)}\right\} {\small ,}
\end{equation*}%
for all $A,B\in CB(X),$ where $D(x,B):=\underset{b\in B}{\inf }d(x,b)$ for
all $x\in X.$ Mapping $H$ is said to be a \textit{Hausdorff metric} induced
by $d.$
\end{definition}

\begin{lemma}[see;\protect\cite{dube}]
\label{dube}Let $A,B\in CB(X),$ then for any $a\in A,$%
\begin{equation*}
{\small D(a,B)\leq H(A,B).}
\end{equation*}
\end{lemma}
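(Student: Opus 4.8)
The plan is to derive the inequality directly from the definitions of $D(\cdot,\cdot)$ and $H(\cdot,\cdot)$; no auxiliary construction or approximation argument is required, since the statement is essentially a monotonicity observation about infima and suprema. First I would fix an arbitrary $a\in A$ and record that, by the definition of the point-to-set distance, $D(a,B)=\inf_{b\in B}d(a,b)$ is one particular element of the set $\{D(x,B):x\in A\}$. Because a supremum of a set of real numbers is an upper bound for that set, this gives
\[
D(a,B)\le\sup_{x\in A}D(x,B).
\]

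Next I would compare the right-hand side with $H(A,B)$. By the definition of the Hausdorff metric,
\[
H(A,B)=\max\left\{\sup_{x\in A}D(x,B),\;\sup_{y\in B}D(y,A)\right\},
\]
and the maximum of two real numbers is at least as large as the first of them, so $\sup_{x\in A}D(x,B)\le H(A,B)$. Combining this with the inequality from the previous step yields $D(a,B)\le\sup_{x\in A}D(x,B)\le H(A,B)$, which is exactly the assertion; and since $a\in A$ was arbitrary, the proof is complete.

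I do not expect any genuine obstacle in this argument; the only points that deserve a word of care are the hypotheses packaged in $A,B\in CB(X)$. Nonemptiness of $A$ is what makes $\sup_{x\in A}D(x,B)$ a supremum over a nonempty index set, nonemptiness of $B$ is what makes $D(a,B)$ a well-defined nonnegative real, and boundedness together with closedness is the standing assumption under which $H$ is a genuine metric on $CB(X)$, so that all the quantities above are finite. With these in place, the two displayed inequalities constitute the whole proof.
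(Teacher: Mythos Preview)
Your argument is correct: the two inequalities $D(a,B)\le\sup_{x\in A}D(x,B)$ and $\sup_{x\in A}D(x,B)\le H(A,B)$ follow immediately from the definitions, and chaining them gives the claim. The paper itself does not supply a proof of this lemma---it is merely stated with a citation to \cite{dube}---so there is no alternative approach to compare; your proof is the standard one-line verification.
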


\begin{definition}[see;\protect\cite{nad}]
Let $(X,d)$ be a metric space. A map $T:X\rightarrow CB(X)$ is said to be
multivalued contraction if there exists $0\leq \lambda <1$ such that 
\begin{equation*}
{\small H(Tx,Ty)\leq \lambda d(x,y)}
\end{equation*}%
for all $x,y\in X.$
\end{definition}

\begin{definition}[see;\protect\cite{nad}]
A point $x_{0}\in X$ is said to be a fixed point of a multivalued mapping $%
T:X\rightarrow CB(X)$ such that $x_{0}\in T(x_{0}).$
\end{definition}

\begin{theorem}[see;\protect\cite{ojha}]
\label{ojha}Let $(X,d)$ be a complete metric space. Suppose $T:X\rightarrow
CB(X)$ is a contraction mapping for some $0\leq \alpha <1,$%
\begin{equation*}
\dint_{0}^{H(Tx,Ty)}{\small \varphi (t)dt\leq \alpha }\dint_{0}^{M(x,y)}%
{\small \varphi (t)dt}
\end{equation*}%
where%
\begin{equation*}
{\small M(x,y)=}\max \left\{ {\small d(x,y),D(x,Tx),D(y,Ty),}\frac{1}{2}%
\left[ {\small D(x,Ty)+D(y,Tx)}\right] \right\}
\end{equation*}%
for all $x,y\in X.$ Then there exists a point $x\in X$ such that $x\in Tx$
(i.e., $x$ is a fixed point of $T$).
\end{theorem}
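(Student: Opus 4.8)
The plan is to run the classical Nadler--Picard iteration adapted to the integral formulation, exactly as in Branciari's single-valued argument. Throughout I assume (as is tacit in the statement) that $\varphi\colon[0,\infty)\to[0,\infty)$ is Lebesgue integrable, summable on compact sets, and satisfies $\int_{0}^{\epsilon}\varphi(t)\,dt>0$ for each $\epsilon>0$. Writing $\Phi(s)=\int_{0}^{s}\varphi(t)\,dt$, one records once and for all that $\Phi$ is continuous and nondecreasing, $\Phi(0)=0$, $\Phi(s)>0$ for $s>0$, and $\Phi(s_{n})\to 0$ implies $s_{n}\to 0$; the hypothesis then reads $\Phi\bigl(H(Tx,Ty)\bigr)\le\alpha\,\Phi\bigl(M(x,y)\bigr)$ for all $x,y\in X$.

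First I would build the orbit. Fix $x_{0}\in X$ and $x_{1}\in Tx_{0}$; if $x_{1}\in Tx_{1}$ we stop. Otherwise, using Lemma \ref{dube} (so $D(x_{1},Tx_{1})\le H(Tx_{0},Tx_{1})$) and a Nadler-type selection in $CB(X)$, choose $x_{2}\in Tx_{1}$ with $d(x_{1},x_{2})$ controlled by $H(Tx_{0},Tx_{1})$, and iterate to get $\{x_{n}\}$ with $x_{n+1}\in Tx_{n}$. Applying the contraction at $(x_{n-1},x_{n})$ and using $x_{n}\in Tx_{n-1}$ --- so $D(x_{n},Tx_{n-1})=0$, $D(x_{n-1},Tx_{n-1})\le d(x_{n-1},x_{n})$, $D(x_{n},Tx_{n})\le d(x_{n},x_{n+1})$ and, by the triangle inequality, $D(x_{n-1},Tx_{n})\le d(x_{n-1},x_{n})+d(x_{n},x_{n+1})$ --- one gets $M(x_{n-1},x_{n})\le\max\{d(x_{n-1},x_{n}),\,d(x_{n},x_{n+1})\}$. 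If this maximum were $d(x_{n},x_{n+1})$ then $\Phi(d(x_{n},x_{n+1}))\le\alpha\,\Phi(d(x_{n},x_{n+1}))$, forcing $d(x_{n},x_{n+1})=0$ and hence $x_{n}\in Tx_{n}$, a fixed point; so we may assume $\Phi(d(x_{n},x_{n+1}))\le\alpha\,\Phi(d(x_{n-1},x_{n}))$ for every $n$, whence $\Phi(d(x_{n},x_{n+1}))\le\alpha^{n}\,\Phi(d(x_{0},x_{1}))\to 0$ and therefore $d(x_{n},x_{n+1})\to 0$.

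The main obstacle is the Cauchy property. Unlike the Banach case, the nonlinearity of $\Phi$ does not let me sum the terms $d(x_{n},x_{n+1})$, and unlike the single-valued integral case the multivalued selection makes it awkward to bound $d(x_{m+1},x_{n+1})$ by $H(Tx_{m},Tx_{n})$, since $x_{m+1},x_{n+1}$ have already been committed to by the construction. I would handle this by the minimal-index subsequence argument: assuming $\{x_{n}\}$ is not Cauchy, pick $\epsilon>0$ and $m_{k}>n_{k}\ge k$ with $d(x_{m_{k}},x_{n_{k}})\ge\epsilon$ and $d(x_{m_{k}-1},x_{n_{k}})<\epsilon$; together with $d(x_{n},x_{n+1})\to 0$ this forces $d(x_{m_{k}},x_{n_{k}})\to\epsilon$, $d(x_{m_{k}+1},x_{n_{k}+1})\to\epsilon$ and $M(x_{m_{k}},x_{n_{k}})\to\epsilon$. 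Feeding this into $\Phi\bigl(d(x_{m_{k}+1},x_{n_{k}+1})\bigr)\le\Phi\bigl(H(Tx_{m_{k}},Tx_{n_{k}})\bigr)\le\alpha\,\Phi\bigl(M(x_{m_{k}},x_{n_{k}})\bigr)$ and letting $k\to\infty$ with $\Phi$ continuous yields $\Phi(\epsilon)\le\alpha\,\Phi(\epsilon)$, a contradiction. The one point that needs genuine care is the estimate comparing $d(x_{m_{k}+1},x_{n_{k}+1})$ with $H(Tx_{m_{k}},Tx_{n_{k}})$: it is clean when the values of $T$ are compact (the infimum in $D(\cdot,\cdot)$ is attained), and in the merely closed-bounded case one absorbs the approximate-selection error into a summable sequence and invokes uniform continuity of $\Phi$ on bounded intervals. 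Completeness now gives $x_{n}\to z$ for some $z\in X$.

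Finally, to see $z\in Tz$, I would use Lemma \ref{dube} once more: $D(x_{n+1},Tz)\le H(Tx_{n},Tz)$, so $\Phi\bigl(D(x_{n+1},Tz)\bigr)\le\alpha\,\Phi\bigl(M(x_{n},z)\bigr)$; since $x_{n}\to z$ and $d(x_{n},x_{n+1})\to 0$ one checks $M(x_{n},z)\to D(z,Tz)$ and $D(x_{n+1},Tz)\to D(z,Tz)$, so in the limit $\Phi\bigl(D(z,Tz)\bigr)\le\alpha\,\Phi\bigl(D(z,Tz)\bigr)$, forcing $D(z,Tz)=0$. As $Tz$ is closed, $z\in Tz$. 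Everything except the Cauchy step and the verification of the properties of $\Phi$ is routine bookkeeping.
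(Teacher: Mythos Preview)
The paper does not supply a proof of this theorem at all; it is quoted from \cite{ojha} in the preliminaries, so there is nothing in the present paper to compare your argument against.

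Independently of that, your Cauchy step contains a real gap. You invoke
\[
\Phi\bigl(d(x_{m_k+1},x_{n_k+1})\bigr)\le\Phi\bigl(H(Tx_{m_k},Tx_{n_k})\bigr),
\]
which, by monotonicity of $\Phi$, amounts to $d(x_{m_k+1},x_{n_k+1})\le H(Tx_{m_k},Tx_{n_k})$. But the Hausdorff distance does not dominate the distance between \emph{arbitrary} representatives of the two sets; Lemma~\ref{dube} only gives $D(x_{m_k+1},Tx_{n_k})\le H(Tx_{m_k},Tx_{n_k})$, and the point of $Tx_{n_k}$ realizing (or nearly realizing) that infimum has no reason to coincide with the already-committed $x_{n_k+1}$, which was selected earlier to approximate $D(x_{n_k},Tx_{n_k})$, not $D(x_{m_k+1},Tx_{n_k})$. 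Compactness of $Tx_{n_k}$ does not help: it guarantees that $D(x_{m_k+1},Tx_{n_k})$ is attained at \emph{some} $y_k\in Tx_{n_k}$, but $y_k\ne x_{n_k+1}$ in general, and $d(x_{m_k+1},x_{n_k+1})$ can exceed $H(Tx_{m_k},Tx_{n_k})$ by an amount of order one. Your ``summable error'' patch for the $CB(X)$ case suffers from exactly the same defect --- it controls the discrepancy between $d(x_n,x_{n+1})$ and $D(x_n,Tx_n)$, not the discrepancy you actually need. Consequently the contradiction $\Phi(\varepsilon)\le\alpha\,\Phi(\varepsilon)$ is not established, and the minimal-index subsequence argument, as written, does not go through in the multivalued setting. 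A valid proof needs a different device for the Cauchy property (for instance, first bounding the orbit and then iterating the \'Ciri\'c-type inequality on the whole tail, or at each stage of the subsequence argument reselecting an auxiliary element of $Tx_{n_k}$ close to $x_{m_k+1}$ and tracking it separately).
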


Now, we will give a new type of contraction called $F$-contraction by
introduced Wardowski \cite{ward} and a fixed point theorem concerning $F$%
-contraction.

\begin{definition}[see;\protect\cite{ward}]
Let $F:$\bigskip $%
\mathbb{R}
^{+}\rightarrow 
\mathbb{R}
$ be a mapping satisfying:
\end{definition}

\begin{itemize}
\item[$(F1)$] $F$ is strictly increasing, i.e. for all $\alpha ,$ $\beta \in 
$ \bigskip $%
\mathbb{R}
^{+}$ such that $\alpha <\beta ,F(\alpha )<F(\beta ),$

\item[$(F2)$] For each sequence $\left\{ \alpha _{n}\right\} _{n\in 
\mathbb{N}
}$ of positive numbers $\underset{n\rightarrow \infty }{\lim }\alpha _{n}=0$
if and only if $\underset{n\rightarrow \infty }{\lim }F(\alpha _{n})=-\infty
,$

\item[$(F3)$] There exists $k\in (0,1)$ such that$\underset{\alpha
\rightarrow 0^{+}}{\lim }\alpha ^{k}F(\alpha )=0.$
\end{itemize}

A mapping $T:X\rightarrow X$ is said to be an $F$-contraction if there
exists $\tau >0$ such that%
\begin{equation}
{\small d(Tx,Ty)>0\Rightarrow \tau +F(d(Tx,Ty))\leq F(d(x,y))}  \label{cont}
\end{equation}%
for all $x,y\in X.$

\begin{remark}
From ($F1$) and (\ref{cont}) it is easy to conclude that every $F$%
-contraction T is a contractive mapping, i.e.%
\begin{equation*}
{\small d(Tx,Ty)<d(x,y),}\text{{\small \ for all} }{\small x,y\in X,}\text{ }%
{\small Tx\neq Ty.}
\end{equation*}
\end{remark}

Thus every $F$-contraction is a continuous mapping.

\begin{remark}
Let $F_{1},F_{2}$ be the mappings satisfying ($F1$-$F3$). If $F_{1}(\alpha
)\leq F_{2}(\alpha )$ for all $\alpha >0$ and a mapping $G:F_{1}$-$F_{2}$ is
nondecreasing then every $F_{1}$-contraction $T$ is $F_{2}$-contraction.
\end{remark}

\begin{theorem}[see;\protect\cite{ward}]
Let $(X,d)$ be a complete metric space and let $T:X\rightarrow X$ be an $F$%
-contraction. Then $T$ has a unique fixed point $x^{\ast }\in X$ and for
every $x_{0}\in X$ a sequence $\left\{ T^{n}x_{0}\right\} _{n\in 
\mathbb{N}
}$ is convergent to $x^{\ast }.$
\end{theorem}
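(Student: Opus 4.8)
The plan is to start from an arbitrary point $x_{0}\in X$ and analyse the Picard iterates defined by $x_{n+1}=Tx_{n}$. If $x_{n_{0}}=x_{n_{0}+1}$ for some index $n_{0}$, then $Tx_{n_{0}}=x_{n_{0}}$, so $x_{n_{0}}$ is already the desired fixed point and the iteration is eventually constant; hence I would assume from now on that $\gamma_{n}:=d(x_{n},x_{n+1})>0$ for every $n$. Applying the contraction inequality (\ref{cont}) to the pair $(x_{n-1},x_{n})$ gives $\tau+F(\gamma_{n})\leq F(\gamma_{n-1})$, and iterating this bound yields $F(\gamma_{n})\leq F(\gamma_{0})-n\tau$ for all $n\in\mathbb{N}$. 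Letting $n\to\infty$ forces $F(\gamma_{n})\to-\infty$, so property $(F2)$ gives $\gamma_{n}\to 0$.

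The next, and crucial, step is to upgrade $\gamma_{n}\to 0$ to absolute summability of $\sum_{n}\gamma_{n}$, which is where $(F3)$ enters. Choose $k\in(0,1)$ with $\gamma_{n}^{k}F(\gamma_{n})\to 0$. Multiplying $F(\gamma_{n})\leq F(\gamma_{0})-n\tau$ by $\gamma_{n}^{k}>0$ and rearranging gives $0\leq n\tau\gamma_{n}^{k}\leq \gamma_{n}^{k}F(\gamma_{0})-\gamma_{n}^{k}F(\gamma_{n})$; since $\gamma_{n}^{k}\to 0$ kills the first term on the right and $(F3)$ kills the second, I conclude $n\gamma_{n}^{k}\to 0$. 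Hence there is $n_{1}$ with $\gamma_{n}\leq n^{-1/k}$ for all $n\geq n_{1}$, and because $1/k>1$ the series $\sum_{n}\gamma_{n}$ converges. It follows that for $m>n\geq n_{1}$ one has $d(x_{n},x_{m})\leq\sum_{j\geq n}\gamma_{j}$, which tends to $0$ as $n\to\infty$; thus $\{x_{n}\}$ is Cauchy and, by completeness of $(X,d)$, converges to some $x^{\ast}\in X$. This already delivers the asserted convergence $T^{n}x_{0}\to x^{\ast}$.

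It then remains to show that $x^{\ast}$ is a fixed point and that it is unique. By the Remark following (\ref{cont}), every $F$-contraction is contractive and hence continuous, so $x_{n+1}=Tx_{n}\to Tx^{\ast}$; comparing with $x_{n+1}\to x^{\ast}$ and using uniqueness of limits gives $Tx^{\ast}=x^{\ast}$. (Alternatively, without invoking continuity: $d(x_{n+1},Tx^{\ast})=d(Tx_{n},Tx^{\ast})\leq d(x_{n},x^{\ast})\to 0$, so $x_{n+1}\to Tx^{\ast}$ and thus $Tx^{\ast}=x^{\ast}$.) For uniqueness, suppose $x^{\ast}$ and $y^{\ast}$ are fixed points with $x^{\ast}\neq y^{\ast}$; then $d(Tx^{\ast},Ty^{\ast})=d(x^{\ast},y^{\ast})>0$, so (\ref{cont}) gives $\tau+F(d(x^{\ast},y^{\ast}))\leq F(d(x^{\ast},y^{\ast}))$, which is impossible since $\tau>0$.

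I expect the only genuinely delicate point to be the summability step in the second paragraph: this is exactly where all three axioms $(F1)$--$(F3)$ are really used, and in particular where the polynomial decay rate $\gamma_{n}=O(n^{-1/k})$ extracted from $(F3)$ is essential, since mere monotone convergence $\gamma_{n}\to 0$ would not be enough to make $\{x_{n}\}$ a Cauchy sequence. Everything else is either a routine telescoping estimate or a direct contradiction argument with (\ref{cont}).
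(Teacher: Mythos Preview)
Your proof is correct and is precisely the standard Wardowski argument. Note, however, that the paper does not give its own proof of this statement: it is quoted as a preliminary result from \cite{ward} without proof, so there is nothing in the paper to compare against directly. That said, the proof of the paper's main theorem in Section~2 follows exactly the template you wrote out---iterating the $F$-contraction inequality to get $F(\gamma_{n})\leq F(\gamma_{0})-n\tau$, invoking $(F2)$ for $\gamma_{n}\to 0$, invoking $(F3)$ to extract $\gamma_{n}\leq n^{-1/k}$ and hence summability, and then a Cauchy/completeness argument---so your write-up is fully aligned with both Wardowski's original proof and the paper's own methodology.
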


\begin{definition}[see;\protect\cite{acar}]
Let $(X,d)$ be a metric space and $T:X\rightarrow CB(X)$ be a mapping. Then $%
T$ is said to be a generalized multivalued $F$-contraction if there exists $%
\tau >0$ such that%
\begin{equation*}
{\small H(Tx,Ty)>0\Rightarrow \tau +F(H(Tx,Ty))\leq F(M(x,y)),}
\end{equation*}%
where 
\begin{equation*}
{\small M(x,y)=}\max \left\{ {\small d(x,y),D(x,Tx),D(y,Ty),}\frac{1}{2}%
\left[ {\small D(x,Ty)+D(y,Tx)}\right] \right\} ,
\end{equation*}%
for all $x,y\in X.$
\end{definition}

\begin{theorem}[see;\protect\cite{acar}]
Let $(X,d)$ be a complete metric space and $T:X\rightarrow K(X)$ be a
generalized multivalued $F$-contraction. If $T$ or $F$ is continuous, then $%
T $ has a fixed point in $X.$
\end{theorem}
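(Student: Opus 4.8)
The plan is to run the classical Nadler-type iteration adapted to the $F$-contraction setting, using the compactness of the values $Tx\in K(X)$ to realise distances exactly. Fix $x_{0}\in X$ and $x_{1}\in Tx_{0}$, and inductively, given $x_{n}\in Tx_{n-1}$, choose by compactness of $Tx_{n}$ a point $x_{n+1}\in Tx_{n}$ with $d(x_{n},x_{n+1})=D(x_{n},Tx_{n})$. If ever $d_{n}:=d(x_{n},x_{n+1})=0$ then $x_{n}\in Tx_{n}$ and we are done, so assume $d_{n}>0$ for all $n$; this also forces $H(Tx_{n-1},Tx_{n})>0$, since $H(Tx_{n-1},Tx_{n})=0$ would give $Tx_{n-1}=Tx_{n}$ and hence $d_{n}=D(x_{n},Tx_{n})=D(x_{n},Tx_{n-1})=0$.

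First I would show $M(x_{n-1},x_{n})=d(x_{n-1},x_{n})$. Since $x_{n}\in Tx_{n-1}$ we have $D(x_{n-1},Tx_{n-1})\le d(x_{n-1},x_{n})$, $D(x_{n},Tx_{n-1})=0$, and $D(x_{n-1},Tx_{n})\le d(x_{n-1},x_{n})+D(x_{n},Tx_{n})$; inserting these into the definition of $M$ gives $M(x_{n-1},x_{n})\le\max\{d(x_{n-1},x_{n}),D(x_{n},Tx_{n})\}$. If this maximum were $D(x_{n},Tx_{n})$ and strictly larger than $d(x_{n-1},x_{n})$, then Lemma \ref{dube} applied to $x_{n}\in Tx_{n-1}$, together with $(F1)$ and the $F$-contraction inequality, would give $F(M(x_{n-1},x_{n}))\le F(D(x_{n},Tx_{n}))\le F(H(Tx_{n-1},Tx_{n}))\le F(M(x_{n-1},x_{n}))-\tau$, which is impossible. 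Hence $M(x_{n-1},x_{n})=d(x_{n-1},x_{n})$, and combining $D(x_{n},Tx_{n})\le H(Tx_{n-1},Tx_{n})$, $(F1)$ and the contraction condition yields $F(d_{n})\le F(H(Tx_{n-1},Tx_{n}))\le F(d_{n-1})-\tau$, so $F(d_{n})\le F(d_{0})-n\tau$ for all $n$.

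Next I would extract convergence exactly as in Wardowski's argument: $(F2)$ gives $d_{n}\to 0$, and $(F3)$ together with $n\tau\le F(d_{0})-F(d_{n})$ gives $\lim_{n}nd_{n}^{k}=0$ for the relevant $k\in(0,1)$; hence $d_{n}\le n^{-1/k}$ for all large $n$, so $\sum_{n}d_{n}<\infty$, $\{x_{n}\}$ is Cauchy, and by completeness $x_{n}\to x^{\ast}\in X$. It then remains to prove $x^{\ast}\in Tx^{\ast}$. If $T$ is continuous this is immediate: $H(Tx_{n},Tx^{\ast})\to 0$, and by Lemma \ref{dube}, $D(x^{\ast},Tx^{\ast})\le d(x^{\ast},x_{n+1})+D(x_{n+1},Tx^{\ast})\le d(x^{\ast},x_{n+1})+H(Tx_{n},Tx^{\ast})\to 0$, so $x^{\ast}\in Tx^{\ast}$ since $Tx^{\ast}$ is compact, hence closed.

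The main obstacle is the case in which only $F$ is continuous, which I would handle by contradiction, assuming $D(x^{\ast},Tx^{\ast})>0$. If $H(Tx_{n},Tx^{\ast})=0$ for infinitely many $n$, then $x_{n+1}\in Tx_{n}=Tx^{\ast}$ along that subsequence and $D(x^{\ast},Tx^{\ast})\le d(x^{\ast},x_{n+1})\to 0$, a contradiction; so $H(Tx_{n},Tx^{\ast})>0$ for all large $n$ and the $F$-contraction gives $\tau+F(H(Tx_{n},Tx^{\ast}))\le F(M(x_{n},x^{\ast}))$. Using $x_{n+1}\in Tx_{n}$, $d_{n}\to 0$ and $x_{n}\to x^{\ast}$, one checks that each of the four terms defining $M(x_{n},x^{\ast})$ has limit $\le D(x^{\ast},Tx^{\ast})$ while the term $D(x^{\ast},Tx^{\ast})$ is constant, so $M(x_{n},x^{\ast})\to D(x^{\ast},Tx^{\ast})$; on the other side, Lemma \ref{dube} and the triangle inequality give $H(Tx_{n},Tx^{\ast})\ge D(x_{n+1},Tx^{\ast})\ge D(x^{\ast},Tx^{\ast})-d(x^{\ast},x_{n+1})$. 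Applying $(F1)$ to both estimates and letting $n\to\infty$ with $F$ continuous yields $\tau+F(D(x^{\ast},Tx^{\ast}))\le F(D(x^{\ast},Tx^{\ast}))$, contradicting $\tau>0$; hence $D(x^{\ast},Tx^{\ast})=0$ and $x^{\ast}\in Tx^{\ast}$. The only delicate point is justifying that $M(x_{n},x^{\ast})\to D(x^{\ast},Tx^{\ast})$ and that the one-sided bounds on $H(Tx_{n},Tx^{\ast})$ and $M(x_{n},x^{\ast})$ really pass to the limit under $(F1)$ and the continuity of $F$.
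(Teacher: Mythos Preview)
Your proof is correct and follows essentially the same Nadler--Wardowski iteration scheme that the paper (and the cited source \cite{acar}) uses: compactness of $Tx$ to realise $D(x_{n},Tx_{n})$ exactly, reduction of $M(x_{n-1},x_{n})$ to $d(x_{n-1},x_{n})$ by ruling out the $D(x_{n},Tx_{n})$ alternative via the $F$-contraction inequality, the standard $(F2)$--$(F3)$ summability argument, and the two endgame cases depending on whether $T$ or $F$ is continuous. The paper does not give its own proof of this cited theorem, but its proof of the integral-type generalisation in the main section is precisely your argument with $\int_{0}^{(\cdot)}\varphi(t)\,dt$ wrapped around each distance, and your handling of the limits in the ``$F$ continuous'' case is in fact more carefully justified than the paper's.
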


\begin{theorem}[see;\protect\cite{altun}]
Let $(X,d)$ be a complete metric space and $T:X\rightarrow K(X)$ be a
multivalued $F$-contraction, then $T$ has a fixed point in $X$.
\end{theorem}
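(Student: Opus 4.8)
The plan is to imitate Nadler's iteration scheme, with the multiplicative contraction constant replaced by the additive constant $\tau$ supplied by $F$, and then to run Wardowski's convergence argument. First I would fix an arbitrary $x_{0}\in X$ and choose $x_{1}\in Tx_{0}$; if $x_{1}\in Tx_{1}$ we already have a fixed point, so assume not. Then $x_{1}\in Tx_{0}\setminus Tx_{1}$, so $Tx_{0}\neq Tx_{1}$ and hence $H(Tx_{0},Tx_{1})>0$. Because $Tx_{1}\in K(X)$ is compact, the infimum defining $D(x_{1},Tx_{1})$ is attained, so there is $x_{2}\in Tx_{1}$ with $d(x_{1},x_{2})=D(x_{1},Tx_{1})$, and Lemma \ref{dube} gives $d(x_{1},x_{2})\leq H(Tx_{0},Tx_{1})$. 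Iterating, either the process stops at some $x_{n}\in Tx_{n}$ (a fixed point), or we obtain a sequence $\{x_{n}\}$ with $x_{n+1}\in Tx_{n}$, $H(Tx_{n-1},Tx_{n})>0$, and $\gamma_{n}:=d(x_{n},x_{n+1})\leq H(Tx_{n-1},Tx_{n})$ for every $n$. Applying $(F1)$ and the $F$-contraction inequality then yields
\[
F(\gamma_{n})\leq F(H(Tx_{n-1},Tx_{n}))\leq F(\gamma_{n-1})-\tau ,
\]
so by induction $F(\gamma_{n})\leq F(\gamma_{0})-n\tau$.

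Next I would deduce, exactly as in Wardowski's proof, that $\{x_{n}\}$ is Cauchy. Letting $n\to\infty$ in $F(\gamma_{n})\leq F(\gamma_{0})-n\tau$ gives $\lim_{n}F(\gamma_{n})=-\infty$, so $(F2)$ forces $\gamma_{n}\to 0$. By $(F3)$ there is $k\in(0,1)$ with $\gamma_{n}^{k}F(\gamma_{n})\to 0$; multiplying $n\tau\leq F(\gamma_{0})-F(\gamma_{n})$ by $\gamma_{n}^{k}/\tau$ and using $\gamma_{n}^{k}\to 0$ shows $n\gamma_{n}^{k}\to 0$, hence $\gamma_{n}\leq n^{-1/k}$ for all large $n$. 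Since $1/k>1$, the series $\sum_{n}\gamma_{n}$ converges, so the usual estimate $d(x_{n},x_{m})\leq\sum_{j=n}^{m-1}\gamma_{j}$ shows $\{x_{n}\}$ is Cauchy; by completeness, $x_{n}\to z$ for some $z\in X$.

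The remaining and most delicate point is to verify $z\in Tz$. The natural bound is
\[
D(z,Tz)\leq d(z,x_{n+1})+D(x_{n+1},Tz)\leq d(z,x_{n+1})+H(Tx_{n},Tz),
\]
using $x_{n+1}\in Tx_{n}$ and Lemma \ref{dube}. The obstacle is that $T$ is not assumed continuous, so $H(Tx_{n},Tz)$ cannot be controlled directly; instead I would split into two cases. If $H(Tx_{n},Tz)>0$ for all large $n$, then the $F$-contraction inequality together with $(F1)$ gives $H(Tx_{n},Tz)<d(x_{n},z)\to 0$, so the right-hand side above tends to $0$. Otherwise there is a subsequence $(n_{j})$ with $H(Tx_{n_{j}},Tz)=0$, i.e. $Tx_{n_{j}}=Tz$, whence $D(x_{n_{j}+1},Tz)=0$ and $D(z,Tz)\leq d(z,x_{n_{j}+1})\to 0$. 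In either case $D(z,Tz)=0$, and since $Tz\in K(X)$ is closed, $z\in Tz$. I expect this case analysis to be the main hurdle: the additive nature of the $F$-contraction estimate blocks the straightforward limiting argument available in the metric-contraction setting, and compactness of the values of $T$ is used twice — to attain the infimum when constructing $\{x_{n}\}$, and to pass from $D(z,Tz)=0$ to $z\in Tz$.
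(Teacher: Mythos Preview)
Your argument is correct. Note, however, that the paper does not prove this statement: it is quoted from Altun et al.\ as a preliminary result, so there is no in-paper proof to compare against directly. The closest analogue is the proof of the paper's main theorem (the integral-type generalization), and your construction of the iterative sequence and the Wardowski-style Cauchy argument mirror that proof essentially line for line.

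The one genuine difference worth flagging is the final step. The paper's main theorem carries the additional hypothesis that $T$ or $F$ is continuous and uses it to pass to the limit and obtain $z\in Tz$. You avoid any continuity assumption by your case split, exploiting the fact that for the \emph{plain} multivalued $F$-contraction (right-hand side $F(d(x,y))$ rather than $F(M(x,y))$) condition $(F1)$ already gives $H(Tx_{n},Tz)<d(x_{n},z)\to 0$. This is precisely why Altun et al.'s result needs no continuity hypothesis while the generalized versions (with $M(x,y)$) in Acar et al.\ and in the present paper do: once $D(z,Tz)$ appears inside the right-hand side, the simple strict-contraction bound is lost and one must fall back on continuity. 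Your closing remark that ``the additive nature of the $F$-contraction estimate blocks the straightforward limiting argument'' is therefore slightly off --- it is not the additive form that causes trouble, but the presence of $M(x,y)$ in the generalized setting; in the plain setting you handle here, the limiting argument goes through cleanly, exactly as you wrote it.
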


\begin{theorem}[see;\protect\cite{altun}]
Let $(X,d)$ be a complete metric space and let $T:X\rightarrow CB(X)$ be a $%
F $-contraction. Suppose that $F$ also satisfies
\end{theorem}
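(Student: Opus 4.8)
The plan is to run Wardowski's iteration inside the Hausdorff metric, using the extra hypothesis $(F4)$ --- that $F(\inf A)=\inf F(A)$ for every $A\subseteq(0,\infty)$ with $\inf A>0$ --- to perform the point selection that would otherwise fail because a member of $CB(X)$ need not be compact, so that $D(x,Tx)$ need not be attained. Fix $x_0\in X$ and $x_1\in Tx_0$. If $x_n\in Tx_n$ at some stage, then $x_n$ is the desired fixed point and we stop; so assume $x_n\notin Tx_n$ for every $n$. Since $Tx_n$ is closed we get $D(x_n,Tx_n)>0$, and since $x_n\in Tx_{n-1}$, Lemma \ref{dube} gives $0<D(x_n,Tx_n)\le H(Tx_{n-1},Tx_n)$; in particular $H(Tx_{n-1},Tx_n)>0$ and also $x_{n-1}\neq x_n$ (otherwise $x_n\in Tx_{n-1}=Tx_n$). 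Hence the $F$-contraction inequality applies and, by $(F1)$, $\tau+F(D(x_n,Tx_n))\le\tau+F(H(Tx_{n-1},Tx_n))\le F(d(x_{n-1},x_n))$. Now $(F4)$ gives $F(D(x_n,Tx_n))=\inf_{y\in Tx_n}F(d(x_n,y))$, so, having fixed once and for all some $\tau'\in(0,\tau)$, the definition of infimum lets us pick $x_{n+1}\in Tx_n$ with $F(d(x_n,x_{n+1}))<F(D(x_n,Tx_n))+(\tau-\tau')$, and therefore $\tau'+F(d(x_n,x_{n+1}))\le F(d(x_{n-1},x_n))$.

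Set $a_n=d(x_n,x_{n+1})$, which is positive for all $n$ by the same argument as above. Iterating the last inequality gives $F(a_n)\le F(a_0)-n\tau'$, so $F(a_n)\to-\infty$ and hence $a_n\to 0$ by $(F2)$. Taking $k\in(0,1)$ as in $(F3)$, multiplying $F(a_n)\le F(a_0)-n\tau'$ by $a_n^k$ and letting $n\to\infty$ forces $na_n^k\to 0$; thus $a_n\le n^{-1/k}$ for large $n$, and since $1/k>1$ the series $\sum_n a_n$ converges, so $\{x_n\}$ is Cauchy. By completeness, $x_n\to x^\ast$ for some $x^\ast\in X$.

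To finish, suppose $x^\ast\notin Tx^\ast$; then $\delta:=D(x^\ast,Tx^\ast)>0$ since $Tx^\ast$ is closed. For each $n$: if $Tx_n=Tx^\ast$ then $x_{n+1}\in Tx^\ast$, so $\delta\le d(x^\ast,x_{n+1})$; if $Tx_n\neq Tx^\ast$ then $H(Tx_n,Tx^\ast)>0$ and necessarily $x_n\neq x^\ast$, so the $F$-contraction inequality with $(F1)$ gives $H(Tx_n,Tx^\ast)<d(x_n,x^\ast)$, whence by Lemma \ref{dube} and the triangle inequality $\delta\le d(x^\ast,x_{n+1})+D(x_{n+1},Tx^\ast)\le d(x^\ast,x_{n+1})+H(Tx_n,Tx^\ast)<d(x^\ast,x_{n+1})+d(x_n,x^\ast)$. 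In either case $\delta\le d(x^\ast,x_{n+1})+d(x_n,x^\ast)$, and the right-hand side tends to $0$ as $n\to\infty$, contradicting $\delta>0$. Hence $x^\ast\in Tx^\ast$.

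The only genuinely delicate point --- and the sole place $(F4)$ is used --- is the selection of $x_{n+1}$ in the first paragraph: with $Tx_n$ merely closed and bounded, $D(x_n,Tx_n)$ may not be realized by any point of $Tx_n$, so one cannot simply take $x_{n+1}\in Tx_n$ with $\tau+F(d(x_n,x_{n+1}))\le F(d(x_{n-1},x_n))$. Condition $(F4)$ lets the infimum pass through $F$, so we can choose $x_{n+1}$ for which $F(d(x_n,x_{n+1}))$ lies within the fixed margin $\tau-\tau'$ of $\inf_{y\in Tx_n}F(d(x_n,y))$; because $\tau-\tau'$ is a single constant it does not accumulate along the iteration, and the telescoping bound $F(a_n)\le F(a_0)-n\tau'$ survives intact. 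The rest is Wardowski's argument carried over verbatim to $H$.
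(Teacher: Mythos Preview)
The paper does not prove this statement: it is quoted from \cite{altun} as a preliminary result, with no proof supplied, so there is nothing in the paper to compare your argument against. That said, your proof is correct and is exactly the expected argument. You have identified the only real issue --- that with $Tx_n\in CB(X)$ rather than $K(X)$ the distance $D(x_n,Tx_n)$ need not be attained --- and you use $(F4)$ precisely where it is needed, to pass the infimum through $F$ and select $x_{n+1}$ with $F(d(x_n,x_{n+1}))$ within a fixed margin $\tau-\tau'$ of $F(D(x_n,Tx_n))$; the telescoping then runs with $\tau'$ in place of $\tau$, and the remainder is Wardowski's argument verbatim. Your closing step, bounding $D(x^\ast,Tx^\ast)$ via $H(Tx_n,Tx^\ast)<d(x_n,x^\ast)$ (a consequence of $(F1)$ and the $F$-contraction inequality), is also clean and avoids any continuity assumption on $F$ or $T$.
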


\begin{itemize}
\item[(F4)] $F(\inf $ $A)=\inf $ $F(A)$ for all $A\subset (0,\infty )$ with $%
\inf $ $A>0$. Then $T$ has a fixed point.
\end{itemize}

\section{Main Result}

\begin{definition}
Let $(X,d)$ be complete metric space and $T:X\rightarrow K(X)$ be a $F$%
-contraction of generalized multivalued integral type mapping if there
exists $\tau >0$ such that, for all $x,y\in X$, 
\begin{equation}
{\small H(Tx,Ty)>0\Rightarrow \tau +F}\left( \dint_{0}^{H(Tx,Ty)}{\small %
\varphi (t)dt}\right) {\small \leq F}\left( \dint_{0}^{M(x,y)}{\small %
\varphi (t)dt}\right) ,  \label{bir}
\end{equation}%
where 
\begin{equation*}
{\small M(x,y)=}\max \left\{ {\small d(x,y),D(x,Tx),D(y,Ty),}\frac{1}{2}%
\left[ {\small D(x,Ty)+D(y,Tx)}\right] \right\} ,
\end{equation*}%
where $\varphi :[0,+\infty )\rightarrow \lbrack 0,+\infty )$ is a
Lebesque-integrable mapping which is summable on each compact subset of $%
[0,+\infty ),$ non-negative, and such that for each $\varepsilon >0,$ $%
\int_{0}^{\varepsilon }\varphi (t)dt>0.$

\begin{example}
Let $F:$\bigskip $%
\mathbb{R}
^{+}\rightarrow 
\mathbb{R}
$ be a mapping given by $F(\alpha )=\ln \alpha .$ It is clear that $F$
satisfies $(F1)$-$(F3),$ $for$ $any$ $k\in (0,1)$. Every $F$-contraction for
generalized\qquad \qquad multivalued integral type mapping satisfies%
\begin{equation*}
{\small \tau +F}\left( \dint_{0}^{H(Tx,Ty)}{\small \varphi (t)dt}\right) 
{\small \leq F}\left( \dint_{0}^{M(x,y)}{\small \varphi (t)dt}\right) 
{\small .}
\end{equation*}%
Now we have%
\begin{equation*}
\ln {\small e}^{\tau }{\small +}\ln \left( \dint_{0}^{H(Tx,Ty)}{\small %
\varphi (t)dt}\right) {\small \leq }\ln \left( \dint_{0}^{M(x,y)}{\small %
\varphi (t)dt}\right)
\end{equation*}%
\begin{equation*}
\dint_{0}^{H(Tx,Ty)}{\small \varphi (t)dt\leq e}^{-\tau }\dint_{0}^{M(x,y)}%
{\small \varphi (t)dt,}
\end{equation*}%
for all ${\small x,y\in X,}$ ${\small Tx\neq Ty}.$ It is clear that for $%
x,y\in X$ such that $Tx=Ty$ the inequality%
\begin{equation*}
\dint_{0}^{H(Tx,Ty)}{\small \varphi (t)dt\leq e}^{-\tau }\dint_{0}^{M(x,y)}%
{\small \varphi (t)dt}
\end{equation*}%
also holds, i.e. $T$ is a contraction.
\end{example}
\end{definition}

\begin{theorem}
Let $(X,d)$ be a complete metric space and $T:X\rightarrow K(X)$ be a $F$%
-contraction of generalized multivalued integral type mapping. If $T$ or $F$
is continuous, then $T$ has a fixed point in $X.$
\end{theorem}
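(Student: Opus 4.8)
The plan is to mimic the Picard-type iteration used for multivalued $F$-contractions (as in Altun et al.\ and Acar--Minak), but carried out on the integral functional $\Phi(s):=\int_0^s\varphi(t)\,dt$. First I would fix $x_0\in X$ and choose $x_1\in Tx_0$; if $x_1\in Tx_1$ we are done, so assume $H(Tx_0,Tx_1)>0$. Since $T$ has compact values, $D(x_1,Tx_1)$ is attained, so I can pick $x_2\in Tx_1$ with $d(x_1,x_2)=D(x_1,Tx_1)\le H(Tx_0,Tx_1)$. Because $\varphi\ge 0$, monotonicity of $\Phi$ gives $\Phi(d(x_1,x_2))\le\Phi(H(Tx_0,Tx_1))$, and then ($F1$) together with \eqref{bir} yields
\begin{equation*}
F\bigl(\Phi(d(x_1,x_2))\bigr)\le F\bigl(\Phi(H(Tx_0,Tx_1))\bigr)\le F\bigl(\Phi(M(x_0,x_1))\bigr)-\tau .
\end{equation*}
Iterating this with $x_{n+1}\in Tx_n$ chosen so that $d(x_n,x_{n+1})=D(x_n,Tx_n)$, I would next show $M(x_{n-1},x_n)\le\max\{d(x_{n-1},x_n),d(x_n,x_{n+1})\}$, using $D(x_{n-1},Tx_{n-1})=d(x_{n-1},x_n)$, $D(x_n,Tx_n)=d(x_n,x_{n+1})$, and the triangle-inequality bound $\tfrac12[D(x_{n-1},Tx_n)+D(x_n,Tx_{n-1})]\le\tfrac12 d(x_{n-1},x_{n+1})\le\tfrac12(d(x_{n-1},x_n)+d(x_n,x_{n+1}))$. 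The max cannot equal $d(x_n,x_{n+1})$ strictly (that would give $F(\Phi(d(x_n,x_{n+1})))\le F(\Phi(d(x_n,x_{n+1})))-\tau$, impossible), so $M(x_{n-1},x_n)=d(x_{n-1},x_n)$ and hence $F(\Phi(d(x_n,x_{n+1})))\le F(\Phi(d(x_{n-1},x_n)))-\tau$ for every $n$.

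From here the argument is Wardowski's: writing $\gamma_n:=\Phi(d(x_n,x_{n+1}))$ we get $F(\gamma_n)\le F(\gamma_0)-n\tau\to-\infty$, so by ($F2$) $\gamma_n\to 0$, and then by the defining property of $\varphi$ (namely $\int_0^\varepsilon\varphi>0$ for every $\varepsilon>0$) together with the fact that $\Phi$ is nondecreasing we deduce $d(x_n,x_{n+1})\to 0$. Invoking ($F3$), pick $k\in(0,1)$ with $\gamma_n^k F(\gamma_n)\to 0$; the standard estimate $\gamma_n^k(F(\gamma_n)-F(\gamma_0))\le -n\tau\gamma_n^k\le 0$ forces $n\gamma_n^k\to 0$, whence $\gamma_n\le n^{-1/k}$ for large $n$, so $\sum\gamma_n<\infty$. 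Because $\Phi$ is nondecreasing and $\int_0^\varepsilon\varphi>0$, this in turn gives $\sum d(x_n,x_{n+1})<\infty$ (one handles the passage from $\sum\Phi(d(x_n,x_{n+1}))<\infty$ to $\sum d(x_n,x_{n+1})<\infty$ by noting that if infinitely many $d(x_n,x_{n+1})$ exceeded some fixed $\delta>0$ then the corresponding $\Phi$-terms would each be at least $\int_0^\delta\varphi>0$, contradicting convergence — I expect this to be the one slightly delicate point). Hence $\{x_n\}$ is Cauchy; by completeness $x_n\to z$ for some $z\in X$.

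Finally I would show $z\in Tz$. If $F$ is continuous: $D(x_{n+1},Tz)\le H(Tx_n,Tz)$, and as long as $H(Tx_n,Tz)>0$ we have $F(\Phi(H(Tx_n,Tz)))\le F(\Phi(M(x_n,z)))-\tau$; since $M(x_n,z)\to D(z,Tz)$ (using $x_n\to z$, $d(x_n,x_{n+1})\to0$, continuity of $d$ and of $D(\cdot,Tz)$-type expressions, plus $D(x_n,Tx_n)=d(x_n,x_{n+1})\to 0$), passing to the limit and using continuity of $F$ and of $\Phi$ would give $F(\Phi(D(z,Tz)))\le F(\Phi(D(z,Tz)))-\tau$ if $D(z,Tz)>0$, a contradiction; handling the subsequence where $H(Tx_n,Tz)=0$ is immediate since then $D(x_{n+1},Tz)=0$. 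Thus $D(z,Tz)=0$, and compactness (hence closedness) of $Tz$ gives $z\in Tz$. If instead $T$ is continuous: $Tx_n\to Tz$ in the Hausdorff metric, and since $x_{n+1}\in Tx_n$ with $x_{n+1}\to z$, one gets $D(z,Tz)\le D(z,Tx_n)+H(Tx_n,Tz)\le d(z,x_{n+1})+H(Tx_n,Tz)\to 0$, so again $z\in Tz$. The main obstacle throughout is the bookkeeping that lets one pass between statements about $\Phi(d(\cdot,\cdot))$ and statements about $d(\cdot,\cdot)$ itself — i.e.\ using $\int_0^\varepsilon\varphi>0$ to upgrade $\Phi$-convergence/summability to genuine metric convergence/summability — and making sure each application of ($F1$) is legitimate (both arguments strictly positive).
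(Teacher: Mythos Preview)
Your overall strategy coincides with the paper's: build the Picard sequence using compactness of $Tx_n$ to realise $D(x_n,Tx_n)$ as $d(x_n,x_{n+1})$, reduce $M(x_{n-1},x_n)$ to $d(x_{n-1},x_n)$, run Wardowski's $(F1)$--$(F3)$ machinery on $\gamma_n:=\Phi(d(x_n,x_{n+1}))$ to get $\sum\gamma_n<\infty$, and then treat the two cases ($T$ continuous / $F$ continuous) exactly as the paper does. Your handling of $M(x_{n-1},x_n)$ is in fact more carefully argued than the paper's.

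The genuine gap is precisely the step you flag as ``slightly delicate'': the argument you give for passing from $\sum_n\Phi(d(x_n,x_{n+1}))<\infty$ to $\sum_n d(x_n,x_{n+1})<\infty$ does not work. Saying ``if infinitely many $d(x_n,x_{n+1})$ exceeded some fixed $\delta>0$ then the corresponding $\Phi$-terms would each be at least $\int_0^\delta\varphi>0$'' only yields $d(x_n,x_{n+1})\to 0$, not summability. Concretely, take $\varphi(t)=2t$, so $\Phi(s)=s^2$; a sequence with $d(x_n,x_{n+1})=1/n$ would satisfy $\sum\Phi(d(x_n,x_{n+1}))=\sum 1/n^2<\infty$ while $\sum d(x_n,x_{n+1})=\infty$. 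So the implication you want simply fails under the stated hypotheses on $\varphi$.

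The paper does not attempt this implication. Instead of summing the distances, it bounds $\Phi(d(x_n,x_m))$ directly:
\[
\int_0^{d(x_n,x_m)}\varphi(t)\,dt\;\le\;\sum_{i=n}^{m-1}\int_0^{d(x_i,x_{i+1})}\varphi(t)\,dt\;=\;\sum_{i=n}^{m-1}\gamma_i\;\le\;\sum_{i=n}^{\infty}\frac{1}{i^{1/k}}\longrightarrow 0,
\]
and then deduces $d(x_n,x_m)\to 0$ from $\Phi(d(x_n,x_m))\to 0$ via the hypothesis $\int_0^\varepsilon\varphi>0$ for every $\varepsilon>0$. This bypasses any need for $\sum d(x_n,x_{n+1})<\infty$. (Note that the displayed inequality is a subadditivity statement for $\Phi$, which the paper uses without comment; it is standard in this line of integral-type contraction results, though strictly speaking it requires an extra assumption on $\varphi$ such as being nonincreasing.) If you replace your Cauchy argument by this one, the rest of your proof goes through and matches the paper.
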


\begin{proof}
Let $x_{0}\in X$ be an arbitrary point and define a sequence $\left\{
x_{n}\right\} $ by $x_{n+1}\in Tx_{n}$ for $n=1,2,\ldots .$ As $Tx$ $is$
nonempty for all $x\in X,$ we can choose $x_{1}\in Tx_{0}.$ If $x_{1}\in
Tx_{1},$ then $x_{1}$ is a fixed point of $T$. Let $x_{1}\notin Tx_{1},$
then $D(x_{1},Tx_{1})>0$ since $Tx_{1}$ is compact. By using $(F1)$, from
Lemma \ref{dube} and (\ref{bir}), we can write that%
\begin{eqnarray*}
{\small F}\left( \int_{0}^{D(x_{1},Tx_{1})}{\small \varphi (t)dt}\right) &%
{\small \leq }&{\small F}\left( \int_{0}^{H(Tx_{0},Tx_{1})}{\small \varphi
(t)dt}\right) {\small \leq F}\left( \int_{0}^{M(x_{0},x_{1})}{\small \varphi
(t)dt}\right) {\small -\tau } \\
&{\small =}&{\small F}\left( \int_{0}^{\max \left\{ {\small d(x}_{0}{\small %
,x}_{1}{\small ),D(x}_{0}{\small ,Tx}_{0}{\small ),D(x}_{1}{\small ,Tx}_{1}%
{\small ),}\frac{1}{2}\left[ {\small D(x}_{0}{\small ,Tx}_{1}{\small )+D(x}%
_{1}{\small ,Tx}_{0}{\small )}\right] \right\} \ \ }{\small \varphi (t)dt}%
\right) {\small -\tau } \\
&{\small \leq }&{\small F}\left( \int_{0}^{\max \left\{ {\small d(x}_{0}%
{\small ,x}_{1}{\small ),}\frac{1}{2}{\small D(x}_{0}{\small ,Tx}_{1}{\small %
)}\right\} \ \ }{\small \varphi (t)dt}\right) {\small -\tau } \\
&{\small \leq }&{\small F}\left( \int_{0}^{\max \left\{ {\small d(x}_{0}%
{\small ,x}_{1}{\small ),D(x}_{1}{\small ,Tx}_{1})\right\} \ \ }{\small %
\varphi (t)dt}\right) {\small -\tau }
\end{eqnarray*}%
\begin{equation}
{\small \leq F}\left( \int_{0}^{d(x_{0},x_{1})}{\small \varphi (t)dt}\right) 
{\small -\tau .}  \label{iki}
\end{equation}

Also, since $Tx_{1}$ is compact, we obtain that $x_{2}\in Tx_{1}$ such that $%
d(x_{1},x_{2})=$ $D(x_{1},Tx_{1}).$ From (\ref{iki}) we have%
\begin{eqnarray*}
{\small F}\left( \int_{0}^{d(x_{1},x_{2})}{\small \varphi (t)dt}\right) &%
{\small \leq }&{\small F}\left( \int_{0}^{H(Tx_{0},Tx_{1})}{\small \varphi
(t)dt}\right) {\small \leq F}\left( \int_{0}^{d(x_{0},x_{1})}{\small \varphi
(t)dt}\right) {\small -\tau } \\
{\small F}\left( \int_{0}^{d(x_{2},x_{3})}{\small \varphi (t)dt}\right) &%
{\small \leq }&{\small F}\left( \int_{0}^{d(x_{1},x_{2})}{\small \varphi
(t)dt}\right) {\small -\tau \leq F}\left( \int_{0}^{d(x_{0},x_{1})}{\small %
\varphi (t)dt}\right) {\small -2\tau } \\
&&\vdots
\end{eqnarray*}%
\begin{equation}
{\small F}\left( \int_{0}^{d(x_{n},x_{n+1})}{\small \varphi (t)dt}\right) 
{\small \leq F}\left( \int_{0}^{d(x_{n-1},x_{n})}{\small \varphi (t)dt}%
\right) {\small -\tau \leq F}\left( \int_{0}^{d(x_{0},x_{1})}{\small \varphi
(t)dt}\right) {\small -n\tau .}  \label{uc}
\end{equation}

So we obtain a sequence $\left\{ x_{n}\right\} $ in $X$ such that $%
x_{n+1}\in Tx_{n}$ and for all $n\in N.$

If there exists $n_{0}\in N$ for which $x_{n_{0}}\in Tx_{n_{0}},$ then $%
x_{n_{0}}$ is a fixed point of $T$ and so the proof is completed. Thus,
suppose that for every $n\in 
\mathbb{N}
,$ $x_{n}\notin Tx_{n}.$ Denote 
\begin{equation*}
{\small \gamma }_{n}{\small =}\int_{0}^{d(x_{n},x_{n+1})}{\small \varphi
(t)dt,}
\end{equation*}%
for $n=0,1,2,\ldots .$

Then $\gamma _{n}>0$ for all $n$ and by using (\ref{uc})%
\begin{equation}
{\small F}\left( \gamma _{n}\right) {\small \leq F}\left( \gamma
_{n-1}\right) {\small -\tau \leq F}\left( \gamma _{n-2}\right) {\small %
-2\tau \leq \cdots \leq F}\left( \gamma _{0}\right) {\small -n\tau .}
\label{dort}
\end{equation}

From (\ref{dort}), we get $\underset{n\rightarrow \infty }{lim}$ $F(\gamma
_{n})=-\infty .$ Thus, from ($F2$), we have%
\begin{equation*}
\underset{n\rightarrow \infty }{lim}{\small \gamma }_{n}{\small =0.}
\end{equation*}%
From ($F3$), there exists $k\in (0,1)$ such that 
\begin{equation*}
\underset{n\rightarrow \infty }{lim}{\small \gamma }_{n}^{k}{\small F}\left(
\gamma _{n}\right) {\small =0}.
\end{equation*}

By (\ref{dort}), the following inequality holds for all $n\in 
\mathbb{N}
$%
\begin{equation}
{\small \gamma }_{n}^{k}{\small F}\left( \gamma _{n}\right) {\small -\gamma }%
_{n}^{k}{\small F}\left( \gamma _{0}\right) {\small \leq -\gamma }_{n}^{k}%
{\small n\tau \leq 0}  \label{bes}
\end{equation}

Letting $n\rightarrow \infty $ in (\ref{bes}), we obtain that 
\begin{equation}
\underset{n\rightarrow \infty }{lim}{\small n\gamma }_{n}^{k}{\small =0}
\label{alt}
\end{equation}

From (\ref{alt}), there exists $n_{1}\in 
\mathbb{N}
$ such that $n\gamma _{n}^{k}\leq 1$ for all $n\geq n_{1}.$ So we obtain
that 
\begin{equation}
{\small \gamma }_{n}{\small \leq }\frac{1}{n^{1/k}}{\small .}  \label{yedi}
\end{equation}

Now, let $m,n\in 
\mathbb{N}
$ such that $m>n>n_{1}$ to show that $\left\{ x_{n}\right\} $ is a Cauchy
sequence. By using the triangle inequality for the metric and from (\ref%
{yedi}), we have 
\begin{eqnarray*}
\int_{0}^{d(x_{n},x_{m})}{\small \varphi (t)dt} &{\small \leq }%
&\int_{0}^{d(x_{n},x_{n+1})}{\small \varphi (t)dt+}%
\int_{0}^{d(x_{n+1},x_{n+2})}{\small \varphi (t)dt+\cdots +}%
\int_{0}^{d(x_{m-1},x_{m})}{\small \varphi (t)dt} \\
&{\small =}&{\small \gamma }_{n}{\small +\gamma }_{n+1}{\small +\cdots
+\gamma }_{m-1} \\
&{\small \leq }&\underset{i=n}{\overset{\infty }{\sum }}\frac{1}{i^{1/k}}%
{\small .}
\end{eqnarray*}

By the convergence of the series $\underset{i=n}{\overset{\infty }{\sum }}%
\frac{1}{i^{1/k}},$ we get $\int_{0}^{d(x_{n},x_{m})}\varphi
(t)dt\rightarrow 0$ as $n\rightarrow \infty .$ As a result, $\left\{
x_{n}\right\} $ is a Cauchy sequence in $(X,d).$ Since $(X,d)$ is a complete
metric space, the sequence $\left\{ x_{n}\right\} $ converges to some point $%
z\in X,$ that is, $\underset{n\rightarrow \infty }{lim}x_{n}=z.$

If $T$ is compact, then we have $Tx_{n}\rightarrow Tz$ and from Lemma \ref%
{dube} 
\begin{equation*}
{\small D(x}_{n}{\small ,Tz)\leq H(Tx}_{n-1}{\small ,Tz),}
\end{equation*}

so $D(z,Tz)=0$ and $z\in Tz.$

Now, suppose $F$ is continuous. In this case, we claim that $z\in Tz.$
Assume the contrary, that is, $z\notin Tz.$ In this case, there exists an $%
n_{0}\in 
\mathbb{N}
$ and a subsequence $\left\{ x_{n_{k}}\right\} $ of $\left\{ x_{n}\right\} $
such that $D(x_{n_{k}+1},Tz)>0$ for all $n_{k}\geq n_{0}.$ (Otherwise, there
exists $n_{1}\in 
\mathbb{N}
$ such that $x_{n}\in Tz$ for all $n\geq n_{1},$ which implies that $z\in
Tz. $ This is a contradiction$).$

Since $D(x_{n_{k}+1},Tz)>0$ for all $n_{k}\geq n_{0},$ then we have 
\begin{eqnarray*}
{\small \tau +F}\left( \int_{0}^{D(x_{n_{k}+1},Tz)}{\small \varphi (t)dt}%
\right) &{\small \leq }&{\small \tau +F}\left( \int_{0}^{H(Tx_{n_{k}},Tz)}%
{\small \varphi (t)dt}\right) \\
&{\small \leq }&{\small F}\left( \int_{0}^{M(x_{n_{k}},z)}{\small \varphi
(t)dt}\right) \\
&{\small =}&{\small F}\left( \int_{0}^{\max \left\{ {\small d(x}_{n_{k}}%
{\small ,z),D(x}_{n_{k}}{\small ,Tx}_{n_{k}}{\small ),D(z,Tz),}\frac{1}{2}%
\left[ {\small D(x}_{n_{k}}{\small ,Tz)+D(z,Tx}_{n_{k}}{\small )}\right]
\right\} \ \ }{\small \varphi (t)dt}\right) .
\end{eqnarray*}

Taking the limit as $k\rightarrow \infty $ and using the continuity of $F,$
we have 
\begin{equation*}
{\small \tau +F}\left( \int_{0}^{D(z,Tz)}{\small \varphi (t)dt}\right) 
{\small \leq F}\left( \int_{0}^{D(z,Tz)}{\small \varphi (t)dt}\right) 
{\small ,}
\end{equation*}%
which is a contradiction. Thus, we get $z\in Tz.$ That is, $T$ has a fixed
point.
\end{proof}

\begin{example}
Let $X=\left[ 0,1\right] $ and $d(x,y)=\left\vert x-y\right\vert .$ Define $%
T:X\rightarrow K(X)$ multivalued mapping by 
\begin{equation*}
{\small Tx=}\left[ \frac{x}{4}{\small ,}\frac{x+1}{2}\right] {\small .}
\end{equation*}%
For $x_{0}=0$ and $x_{1}=1$ arbitrary points, we have $T_{0}=\left[ 0,\frac{1%
}{2}\right] $ and $T_{1}=\left[ \frac{1}{4},1\right] .$%
\begin{eqnarray*}
{\small H(T}_{0}{\small ,T}_{1}) &{\small =}&\max \left\{ \underset{x\in
T_{0}}{\sup }{\small D}\left( x,T_{1}\right) {\small ,}\underset{y\in T_{1}}{%
\sup }{\small D}\left( y,T_{0}\right) \right\} \\
&{\small =}&\max \left\{ \underset{x\in T_{0}}{\sup }\underset{y\in T_{1}}{%
\inf }d(x,y),\underset{y\in T_{1}}{\sup }\underset{x\in T_{0}}{\inf }%
d(x,y)\right\} \\
&=&\frac{1}{4}.
\end{eqnarray*}%
Now, we show that mapping $T$ under this condition is not generalized
multivalued integral type mapping. Afterward, we will show that it is a
contraction together with $F$. By using Theorem \ref{ojha} and $\varphi
(t)=1 $ for all $t\in 
\mathbb{R}
$, we have 
\begin{equation*}
\int_{0}^{H(T_{0},T_{1})}{\small dt}{\small \leq \alpha }%
\int_{0}^{d(x_{0},x_{1})}{\small dt}\text{ {\small and} }{\small \alpha \geq 
}\frac{1}{4}{\small .}
\end{equation*}%
It can be shown that multivalued integral type mapping above holds for $%
\alpha \in \left[ \frac{1}{4},1\right) .$ In Theorem \ref{ojha}, it holds $%
0\leq \alpha <1.$ This is a contraction. Under same condition; we will apply 
$F$-contraction. Let us take $F(\alpha )=\ln \alpha $ and $\tau \in \left(
0,1.39\right) $ also $\varphi (t)=1$ for all $t\in 
\mathbb{R}
,$ we have 
\begin{equation*}
{\small F}\left( \int_{0}^{H(T_{0},T_{1})}{\small dt}\right) {\small \leq F}%
\left( \int_{0}^{d(x_{0},x_{1})}{\small dt}\right) {\small -\tau .}
\end{equation*}%
This inequality holds three condition of $F$-contraction:
\end{example}

\begin{itemize}
\item[$i)$] It is easy to see $(F1)$.

\item[$ii)$] Let $x_{n}=\left( \frac{1}{n}\right) \in \left[ 0,1\right] ,$ $%
n=1,2,\ldots $ and $x_{0}=0$. By taking $x_{n}=\left( \frac{1}{n}\right) ,$
we get 
\begin{eqnarray*}
{\small Tx}_{n} &{\small =}&\left[ \frac{1}{4n}{\small ,}\frac{n+1}{2n}%
\right] \\
{\small H(Tx}_{n}{\small ,Tx}_{n+1}{\small )} &{\small =}&\frac{1}{4n(n+1)}%
{\small .}
\end{eqnarray*}%
If we denote that $\gamma _{n}=\int_{0}^{\frac{1}{4n(n+1)}}\varphi (t)dt,$%
{\small \ }$\varphi (t)=1$ for all $t\in 
\mathbb{R}
,$ then{\small \ }$\gamma _{n}=\left( \frac{1}{4n(n+1)}\right) .$ By using
some calculations, we conclude that following inequality:%
\begin{equation}
{\small F}\left( \frac{1}{4n(n+1)}\right) {\small \leq F}\left( \frac{1}{%
4(n-1)n}\right) {\small -\tau \leq \cdots \leq F(1)-n\tau .}  \label{on}
\end{equation}%
By taking{\small \ }$F(\alpha )=\ln \alpha ,$ we have%
\begin{equation*}
{\small 0\leq F}\left( \frac{1}{4n(n+1)}\right) {\small \leq }\ln {\small %
1-n\tau .}
\end{equation*}%
From (\ref{on}), letting $n\rightarrow \infty $, we obtain that%
\begin{equation*}
\underset{n\rightarrow \infty }{\lim }\ln \left( \frac{1}{4n(n+1)}\right) 
{\small =-\infty .}
\end{equation*}%
Also for $\varphi (t)=1,${\small \ }$\gamma _{n}=\left( \frac{1}{4n(n+1)}%
\right) $ and $\underset{n\rightarrow \infty }{\lim }\frac{1}{4n(n+1)}%
{\small =0.}$

\item[$iii)$] We will show that {\small \bigskip }$\underset{n\rightarrow
\infty }{\lim }${\small \ }$\gamma _{n}^{k}F(\gamma _{n})=0.$ Letting $%
{\small n\rightarrow \infty }$, we have 
\begin{equation*}
\underset{n\rightarrow \infty }{\lim }{\small \gamma }_{n}^{k}{\small %
F(\gamma }_{n}{\small )=}\underset{n\rightarrow \infty }{\lim }\frac{1}{%
\left[ 4{\small n(n+1)}\right] ^{k}}\ln \left( \frac{1}{4{\small n(n+1})}%
\right) {\small =0.}
\end{equation*}
\end{itemize}

Since{\small \ }$x_{n}=\left( \frac{1}{n}\right) \in \left[ 0,1\right] $ is
a Cauchy sequence and $\left( X,d\right) $ is complete metric space, taking
the limit as ${\small n\rightarrow \infty }$ we have $\frac{1}{n}$ $%
\rightarrow 0$ in $\left[ 0,1\right] $. However, since ${\small Tx}_{n}%
{\small =}\left[ \frac{1}{4n}{\small ,}\frac{n+1}{2n}\right] $ and ${\small %
Tx}_{n}{\small \in K(X)},$ then $\underset{n\rightarrow \infty }{\lim }%
{\small Tx}_{n}{\small =Tx,}$ that is, we have 
\begin{equation*}
\underset{n\rightarrow \infty }{\lim }\left[ \frac{1}{4n}{\small ,}\frac{n+1%
}{2n}\right] {\small =}\left[ 0,\frac{1}{2}\right] {\small .}
\end{equation*}%
As a result we get{\small \ }$0\in \left[ 0,\frac{1}{2}\right] $, that is, $%
T $ has a fixed point under $F$-contraction.

\bigskip

\end{document}